\newtheorem{theorem}{Theorem}[section]
\newtheorem{proposition}[theorem]{Proposition}
\newtheorem{lemma}[theorem]{Lemma}
\newtheorem{corollary}[theorem]{Corollary}
\theoremstyle{definition}
\newtheorem{definition}[theorem]{Definition}
\theoremstyle{remark}
\newtheorem{remark}[theorem]{Remark}
\numberwithin{equation}{section} 
\DeclareMathOperator{\Sing}{Sing} 
\DeclareMathOperator{\Grass}{Grass}
 \DeclareMathOperator{\Pic}{Pic}
\DeclareMathOperator{\rk}{rk}
\DeclareMathOperator{\im}{im}
\begin{document}

\title[Bounds on the Dimension of the Brill-Noether ...]{Bounds on the Dimension of the Brill-Noether Schemes of Rank Two Bundles
 }

\author[Ali Bajravani]{Ali Bajravani}
\address[]{Department of Mathematics, Azarbaijan Shahid Madani University, Tabriz, I. R. Iran.\\
P. O. Box: 53751-71379}

\email{bajravani@azaruniv.ac.ir}

\maketitle
\vspace{-1cm}
\begin{abstract}
The aim of this note is to find upper bounds on the dimension of Brill-Noether locus' inside the moduli space of rank
two vector bundles on a smooth algebraic curve.
We deduce some consequences of these bounds.
\end{abstract}
\vspace{-0.3cm}
\section{Introduction}
Let $C$ be a projective smooth algebraic curve of genus $g$. For non-negative integers $n$ and $d$ we
denote by $U(n, d)$ the moduli space of stable vector bundles of rank $n$ and degree $d$, which is an irreducible scheme of dimension $n^2(g-1)+1$.
For an integer $k$ with $1\leq k\leq n+\frac{d}{2}$,
 the subset
$$B^k_{n, d}=\{E\in U(n, d)\mid h^0(E)\geq k  \}$$
of $U(n, d)$ inherits the structure of a closed sub-scheme of $U(n, d)$. With these notations, $B^k_{1, d}$ is the scheme of line bundles of degree $d$ with the space of global sections of dimension at least $k$, which is denoted commonly in literature by $W^{k-1}_{d}$.
 In the case of its non-emptiness, $B^k_{n, d}$ is expected to be of dimension
$n^2(g-1)+1-k(k-d+n(g-1))$.
As well, for a fixed line bundle $\mathcal{L}$ of degree $d$ we denote the sub-scheme of $U(n, d)$ parameterizing stable bundles $E\in B_{n, d}^k$ with
 determinant $\mathcal{L}$, by $B_{n, \mathcal{L}}^k$.

The schemes $B_{n, d}^k$, being as natural generalization of the Brill-Noether spaces of line bundles, as well as the spaces $ B_{n, \mathcal{L}}^k$,
have received wide attention from various authors.
However, in contrast with extensive results concerning these schemes, specifically the results on the non-emptiness and existence of components with minimum dimension, there are not, to our knowledge, systematic studies about
upper bounds for their dimensions, when $n\geq 2$.

We study this problem for Brill-Noether schemes of rank two bundles and we obtain upper bounds for $\dim B_{2, d}^k$ and $\dim B_{2, K}^k$, where $K$ denotes the canonical line bundle on $C$.

 The significant point in the rank two case is that a general element in a component of some $B_{2, d}^k$, which violates the upper bound and under some specified circumstances, might be assumed to be globally generated. Under the globally generated assumption, a result of Michael Atiyah is applicable. Based on the mentioned result, a globally generated vector bundle can be represented as an extension of a line bundle by the trivial line bundle.
Then, using the structure of tangent spaces of $B_{2, d}^k$, we relate the kernels of the Petri maps of appropriate bundles in suitable exact sequences. See Theorem \ref{bound theorem non-hyper complete}.
  As a byproduct, we obtain a Mumford type classification result. See Corollary \ref{Mumford thm for rank 2}.

As for the schemes $B_{2, K}^k$ we use an unpublished result of B. Feinberg, which might be considered as a refined version of Atiyah's result. See proposition \ref{feinberg proposition} and lemma \ref{lemma1}.

 By proving that for an arbitrary smooth curve $C$, a specific component
$X \subset B^2_{2, d}$ with prescribed circumstances,
 would be generically smooth of expected dimension;
our results push the results of Teixidor \cite{T. 1} and Flamini etal. \cite{C-F-K}, one step further.
See
remark \ref{remark 0}(c).

Similar problems, as the problems studied in this paper, have been studied for schemes of Secant Loci' in \cite{Bajravani 1}, \cite{Bajravani 2} and \cite{Bajravani 3} by the author.
\section{Preliminaries}
For $E\in B^k_{n, d}\setminus B^{k+1}_{n, d}$, the Petri map associated to $E$ controls the tangent vectors of $B^k_{n,d}$ at $E$. Indeed, the orthogonal of the image of the Petri map
\begin{align}\label{diag1}
\mu^2_{E}: H^0(E)\otimes H^0(K\otimes E^*)\rightarrow H^0(K\otimes E\otimes E^*),
\end{align}
 identifies the tangent space of $B^k_{n, d}$ at $E$. Similarly, the tangent space for $B^k_{2,K}$ is parameterized by the orthogonal of the image of the symmetric Petri map
 \begin{align}\label{diag2}
\mu^0_{s, E}: S^2H^0(E)\rightarrow H^0(S^2E).
\end{align}
See for example \cite{G}.

Assume that $E\in U(2, d)$ and
\begin{align}\label{diag0-3}
0\longrightarrow G \longrightarrow E \longrightarrow L \rightarrow 0,
\end{align}
is an exact sequence of bundles, with $G, L\in \Pic(C)$. Then, there exists a chain of bundles $S^2(E)\supset E^1\supset E^2\supset 0$, such that
\begin{align}\label{diag3-00}
\frac{S^2(E)}{E^1}=2L, \quad \frac{E^1}{E^2}=G\otimes L, \quad E^2=2G.
\end{align}
See \cite[Page 127]{H}. So, one has two exact sequences
\begin{align}\label{diag4}
0\longrightarrow 2G \longrightarrow S^2E \longrightarrow \frac{S^2E}{2G} \rightarrow 0,
\end{align}
\begin{align}\label{diag5}
0\longrightarrow G\otimes L \longrightarrow \frac{S^2E}{2G} \longrightarrow 2L \rightarrow 0.
\end{align}

An unpublished result of B. Feinberg, Lemma \ref{lemma1}, is the key tool in the proof of Theorem \ref{symmetric bound theorem non-hyper}. The lemma is a direct consequence of a characterization result, attributed to B. Feinberg.
We quote Teixidor's statement, \cite[Lemma 1.1]{T. 2}, of this characterizing result in Proposition \ref{feinberg proposition}.
 The proof we present for proposition \ref{feinberg proposition},
 is quoted from Feinberg's unpublished work in \cite{Feinberg}.
\begin{proposition}\label{feinberg proposition}
Denote by $F$ the greatest common divisor of the zeroes of the sections of $E$. Then, either there is a section of $E(-F)$
without zeroes or all sections of $E$ are sections of a line sub-bundle of
$E$.
\end{proposition}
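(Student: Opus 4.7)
The plan is to first reduce to the case $F = 0$ by twisting, and then to analyse the evaluation morphism of $H^0(E)$. For the reduction, multiplication by the defining section of $\mathcal{O}_C(F)$ gives an isomorphism $H^0(E(-F)) \xrightarrow{\sim} H^0(E)$, and line sub-bundles $L' \subset E(-F)$ correspond bijectively to line sub-bundles $L = L'(F) \subset E$. After this replacement the gcd of the zero divisors of sections of $E(-F)$ is $0$, i.e.\ the base locus of $H^0(E(-F))$ is empty. So I replace $E$ by $E(-F)$ and henceforth assume $F = 0$; the problem is then to show that either $E$ has a nowhere-vanishing section, or every section of $E$ factors through a single line sub-bundle.

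Put $V := H^0(E)$ and consider the evaluation morphism $\mathrm{ev} \colon V \otimes \mathcal{O}_C \to E$ with image $W \subseteq E$. I would split on the generic rank of $W$. If the generic rank of $W$ is $1$, then $W$ is a rank-$1$ subsheaf of $E$, and its saturation in $E$ is a line sub-bundle $L$; every section of $E$ lies in $W \subseteq L$, giving the second alternative of the statement.

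If the generic rank of $W$ is $2$, then the degeneracy locus $Z := \{q \in C : \rk_q\,\mathrm{ev} < 2\}$ is a proper closed, hence finite, subset of $C$. Form the incidence variety
\[
I \;=\; \{(s,q) \in (V \setminus \{0\}) \times C : s(q) = 0\}.
\]
The fibre of the projection $I \to C$ over $q$ is $\ker(\mathrm{ev}_q)$, of codimension $2$ in $V$ for $q \in C \setminus Z$ and of codimension $1$ for $q \in Z$ (the case of rank $0$ is excluded because $F = 0$). Hence $\dim I \leq \dim V - 1$. On the other hand, the projection $I \to V \setminus \{0\}$ has finite fibres because the zero scheme of a non-zero section is $0$-dimensional; so the image of $I$ in $V$ is a constructible set of dimension at most $\dim V - 1$, and a general $s \in V$ has empty zero locus. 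This produces the first alternative. The main point requiring care is this parameter count in the rank-$2$ case; the rest is a clean organisation of the dichotomy between the two possible generic ranks of $\mathrm{ev}$.
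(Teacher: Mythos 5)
Your proof is correct and follows essentially the same route as the paper: after twisting down by the base divisor $F$, both arguments hinge on the generic rank of the evaluation map and a dimension count showing that if the image has generic rank $2$ then a general section is nowhere vanishing, while generic rank $1$ yields the line sub-bundle via saturation. Your incidence variety $I$ is just the total space of $\ker(\mathrm{ev})$ used in the paper's Claim I, so the two counts are the same computation in different clothing.
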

\begin{proof}
The assertion is an immediate consequence of the following,

\textit{Claim I:} Assume that $s_1, \cdots s_{r+1}$ are base point free linearly independent sections of $E$ such that the space $\langle s_1, \cdots, s_{r+1}\rangle$ does not contain a nowhere vanishing section. Then,
there exists a line sub-bundle $L$ of $E$ such that $\langle s_1, \cdots, s_{r+1}\rangle$  is contained in $H^0(C, L)$.

\textit{Proof of Claim I:} Set $V:=\langle s_1, \cdots, s_{r+1}\rangle$ and consider the evaluation map
\begin{align}
e_V: C\times V \rightarrow E \quad , \quad e_V(p, s):=s(p).
\end{align}
We show that $\ker(e_V)$ is a vector bundle of rank $r$ and consequently the saturation of the image of $e_V$
is a line bundle. Observe that the hypothesis of being base point free is equivalent to the fact that the dimension of $\ker(e_V)_p$ is at most $r$ for all $p$ in $C$. If, on the other hand, the rank of $\ker(e_V)$
is generically less than $r$, then the dimension of the image of $\ker(e_V)$ under the composition:
$$\ker(e_V)\rightarrow C\times V \rightarrow V$$
is at most $r$. This, however, would imply that $V$ has a nowhere vanishing section, which is a contradiction.  Therefore, $\ker(e_V)$ is a vector bundle of rank $r$ and $e_V$ surjectively maps onto a line sub-bundle in $E$. This completes the proof of the \textit{Claim I}.
\end{proof}
\begin{lemma}\label{lemma1}
Any vector bundle $E$ with $h^0(E)=k\geq 2$ admits an extension as
\begin{align}\label{diag6}
0\longrightarrow \mathcal{O}(D) \stackrel{i}\longrightarrow E \stackrel{\pi}\longrightarrow L \rightarrow 0,
\end{align}
where $D$ is an effective divisor and either $h^0(\mathcal{O}(D))=1$ or $h^0(\mathcal{O}(D))=k$.
\end{lemma}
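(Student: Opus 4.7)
The plan is to invoke Proposition \ref{feinberg proposition} directly and let its dichotomy dictate which of the two alternatives for $h^0(\mathcal{O}(D))$ is realized. I let $F$ denote the greatest common divisor of the zero divisors of the global sections of $E$, and split into the two (not necessarily exclusive) cases provided by that proposition.

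In the first case, where every section of $E$ lies in some line sub-bundle $L\subset E$, I pick any non-zero section of $L$. Its zero divisor $D$ is effective and identifies $L$ with $\mathcal{O}(D)$, so the corresponding inclusion $\mathcal{O}(D)\hookrightarrow E$ (with locally free quotient, since $L$ is a sub-bundle) is precisely the extension sought. The hypothesis that all sections of $E$ factor through $L$, combined with the natural injection $H^0(\mathcal{O}(D))\hookrightarrow H^0(E)$, forces $h^0(\mathcal{O}(D))=k$, matching the second alternative of the lemma.

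In the remaining case, a nowhere-vanishing section of $E(-F)$ yields an inclusion $\mathcal{O}\hookrightarrow E(-F)$ with line-bundle quotient; tensoring by $\mathcal{O}(F)$ produces a line sub-bundle $\mathcal{O}(F)\hookrightarrow E$, and I take $D:=F$, which is effective. The main obstacle — the only place where the definition of $F$ really enters — is verifying that $h^0(\mathcal{O}(F))=1$. My strategy is as follows: any section $\sigma\in H^0(\mathcal{O}(F))$ corresponds to an effective divisor $D_\sigma$ linearly equivalent to $F$, and via the inclusion $\mathcal{O}(F)\hookrightarrow E$ (which is stalkwise injective, since the quotient is locally free) produces a section of $E$ whose zero divisor is exactly $D_\sigma$. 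Since $F$ divides the zero divisor of every section of $E$ by its very definition, one has $F\leq D_\sigma$; comparing degrees then forces $D_\sigma=F$, so every section of $\mathcal{O}(F)$ is a scalar multiple of the canonical section cutting out $F$, yielding $h^0(\mathcal{O}(F))=1$. This delivers the first alternative and closes the argument; no machinery beyond Proposition \ref{feinberg proposition} and this short divisor-theoretic estimate is needed.
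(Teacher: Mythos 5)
Your argument is correct and is exactly the deduction the paper intends: the paper gives no explicit proof of Lemma \ref{lemma1}, asserting only that it is a direct consequence of Proposition \ref{feinberg proposition}, and your two cases (all sections factoring through a line sub-bundle giving $h^0(\mathcal{O}(D))=k$; a nowhere-vanishing section of $E(-F)$ giving the sub-bundle $\mathcal{O}(F)$ with $h^0(\mathcal{O}(F))=1$ via the degree comparison $F\leq D_\sigma$, $\deg F=\deg D_\sigma$) supply precisely the omitted details. No discrepancy with the paper's approach.
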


Motivated by Lemma \ref{lemma1}, two types of bundles with sections are distinguishable.
\begin{definition}\label{definition 1}
A vector bundle $E$ with $h^0(E)=k\geq 1$ will be said of first type if it admits an extension as (\ref{diag6}) with $h^0(\mathcal{O}(D))=k$. Otherwise we call $E$ of second type.
\end{definition}
\section{Main results}\label{section3}
\begin{theorem}\label{bound theorem non-hyper complete}
 Let $k$, $d$ be integers with $3\leq d\leq 2g-2$, $2\leq k\leq 2+\frac{d}{2}$.
Then,
  \begin{align}\label{diag7-1}
\dim B^k_{2, d}\leq 2(g-1)+d-2k+1.
\end{align}
\end{theorem}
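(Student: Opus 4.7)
The plan is to argue by contradiction: assume there is an irreducible component $X \subseteq B^k_{2,d}$ with $\dim X > 2(g-1) + d - 2k + 1$, and choose a general $E \in X$. We may suppose $h^0(E) = k$ (either by direct semicontinuity, or by downward induction on $k$, using the theorem statement itself for $k+1$). Since $T_E X \subseteq T_E B^k_{2,d}$ is the annihilator of $\mathrm{im}\,\mu_E^2$ inside $T_E U(2,d) = H^1(E \otimes E^*)$, and Riemann--Roch gives $h^0(K \otimes E^*) = k + 2g - 2 - d$, the inequality on $\dim X$ translates, after a short calculation, into the lower bound
$$\dim \ker \mu_E^2 \ > \ k(k + 2g - 2 - d) - (2g + 2k - d - 2),$$
and the task becomes producing an upper bound on $\dim \ker \mu_E^2$ that violates this.

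Second, I would realize $E$ as an extension. Let $F$ be the greatest common divisor of the zero loci of the sections of $E$; then $E(-F)$ has sections with no common zero, and Proposition \ref{feinberg proposition} provides two alternatives. If $E(-F)$ carries a nowhere vanishing section, twisting by $\mathcal{O}(F)$ gives an Atiyah-type extension
$$0 \longrightarrow \mathcal{O}(F) \longrightarrow E \longrightarrow L \longrightarrow 0, \qquad \deg L = d - \deg F,\ \ h^0(L) \geq k-1,$$
and one reduces to $F = 0$ (so that $E$ is globally generated) by showing that the stratum with $F \neq 0$ has dimension strictly below the bound. Otherwise $E$ is of ``first type'' in the sense of Definition \ref{definition 1}: all its sections factor through a line sub-bundle $M \subseteq E$ with $h^0(M) \geq k$. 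This second case is handled by a direct parameter count of triples (divisor $F$, line bundle $M$, extension class) using Clifford's inequality $\deg M \geq 2(k-1)$ together with the stability constraint $\deg M < d/2$.

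Third, with the globally generated extension $0 \to \mathcal{O} \to E \to L \to 0$ in hand, the core computation is to relate $\ker \mu_E^2$ to the Petri map of the line bundle $L$. Tensoring the extension and its dual with $K \otimes E^*$, and combining the resulting long exact sequences with the factorization of the Petri square (\ref{diag1}), I expect $\ker \mu_E^2$ to sit in an exact sequence flanked by $\ker \mu_L$ and by cohomological contributions controlled by the extension class $\xi \in \mathrm{Ext}^1(L, \mathcal{O}) = H^1(L^{-1})$. Classical Martens-type bounds on $\dim \ker \mu_L$ for line bundles with $h^0(L) \geq k - 1$ should then contradict the lower bound displayed above, under the hypotheses $3 \leq d \leq 2g - 2$ and $2 \leq k \leq 2 + d/2$.

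The principal obstacle is this last diagram chase: passing from $\ker \mu_E^2$ down to $\ker \mu_L$ involves several connecting maps (in particular the cup-product with $\xi$), and matching the numerics so that precisely the gap $2g + 2k - d - 2$ is accounted for requires delicate book-keeping. A secondary difficulty is making the ``first type'' count in Case (B) tight enough to hit the same bound $2(g-1) + d - 2k + 1$, since the naive product of the dimensions of $W^{k-1}_{\deg M}$, the locus of effective $F$, and the space of extension classes tends to overshoot; one will likely need to invoke stability of $E$ and the Clifford bound simultaneously to collapse the count.
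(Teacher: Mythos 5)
Your skeleton matches the paper's: argue by contradiction, realize a general $E$ as an extension $0\to\mathcal{O}\to E\to L\to 0$, and transfer a lower bound on $\dim\ker\mu^2_E$ down the extension via a snake-lemma diagram. Your numerical translation is also consistent with the paper's (your displayed lower bound equals the paper's $(k-1)(2g-2-d+k-1)$ after substitution). But the two places you flag as ``obstacles'' are exactly where the proof lives, and your plan for each does not close the gap. First, the reduction to global generation: the paper does not estimate the dimension of the stratum with $F\neq 0$ directly. It takes $d$ \emph{minimal} among counterexamples and observes that a base point $p$ of $E$ sends $X$ into $B^{k-1}_{2,d-2}$, whose bound $2(g-1)+(d-2)-2(k-1)+1$ is the \emph{same} number $2(g-1)+d-2k+1$; minimality of $d$ then kills the non-globally-generated case (and, by the same device applied twice, shows the sections of $L$ coming from $H^0(E)$ have at most one base point --- a fact you will need later). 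Your Case (B) (all sections through a line sub-bundle $M$) then never arises: such an $E$ is not globally generated, so it is already disposed of; no separate parameter count of triples $(F,M,\xi)$ is needed, which is fortunate because, as you suspect, the naive count overshoots.

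Second, and more seriously, the endgame cannot be ``classical Martens-type bounds on $\dim\ker\mu_L$.'' The snake lemma identifies $\ker\mu^2_E$ with the kernel of
\begin{align*}
\mu_{L,V}\colon V\otimes H^0(K\otimes E^*)\longrightarrow H^0(K\otimes L\otimes E^*),
\end{align*}
a multiplication of sections of $L$ against sections of the \emph{rank-two} bundle $K\otimes E^*$; this is not the Petri map of the line bundle $L$, and line-bundle Martens bounds do not apply to it. The paper instead restricts to a pencil $V_2\subseteq V$ and applies the base-point-free pencil trick to get $h^0(K\otimes E^*\otimes L^*)\geq 2(2g-2-d+k)-k$ (using $\deg B\leq 1$ from the first step), and then splits into two ranges: for $d\geq\frac{4g-3}{3}$ the stable bundle $K\otimes E^*\otimes L^*$ has negative degree, so this $h^0$ vanishes, a contradiction; for $d\leq\frac{4g-4}{3}$ one needs the Clifford theorem for stable vector bundles (Propositions 3 and 4 of Re) applied to $E\otimes L$ to derive $d+k\leq 0$. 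That external input --- Re's Clifford bound for stable bundles of small slope --- is not ``delicate book-keeping'' but a substantive ingredient absent from your plan, so as written the proposal does not yet constitute a proof.
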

\begin{proof}
Observe first that if a general element $E$ of an irreducible component $X$ of $B^k_{2, d}$ satisfies $h^0(E)\geq k+1$, then we can consider $X$ as a component of $B^{k+1}_{2, d}$. Therefore for general $E\in X$ one may assume $h^0(E)=k$.
Assume that $d$ is a minimum integer such that for some suitable $k$ there exists a
component $X$  of $B^k_{2, d}$ with $\dim X\geq 2(g-1)+d-2k+2$.
 Then, a general element $E$ in $X$ is globally generated. Indeed otherwise we obtain
 $ \dim B^{k-1}_{2, d-2}\geq 2(g-1)+d-2k+2$,
 which is impossible by minimality of $d$.
Therefore, by \cite[Theorem 2]{A}, a general element $E$ in $X$ has a trivial line bundle as its line sub-bundle. Furthermore $E$ admits a representation as
 \begin{align}\label{diag3-0-0}
0\longrightarrow \mathcal{O}_C \stackrel{i}\longrightarrow  E \stackrel{\pi}\longrightarrow L \longrightarrow 0,
\end{align}
with the property that the sections of $L$ belonging to the image of $H^0(\pi)$
 have at most one number of base points. Indeed, if $L$ has the points $p, q$ as its base points, then $h^0(E(-p-q))\geq k-2$. This implies that
$$\dim B^{k-2}_{2, d-4}\geq 2(g-1)+d-2k+2,$$
 which is absurd again by minimality of $d$.
Take an extension as (\ref{diag3-0-0}) and consider the exact sequence
\begin{align}\label{diag3}
0\longrightarrow H^0(\mathcal{O}_C) \stackrel{H^0(i)}\longrightarrow H^0(E) \stackrel{H^0(\pi)}\longrightarrow V \longrightarrow 0,
\end{align}
 where $V$ is the image of the map $H^0(\pi): H^0(E) \longrightarrow H^0(L)$.
  The exact sequence (\ref{diag3}) together with various Petri maps gives rise to a commutative diagram as
\begin{align}\label{diag -11}
\xymatrix { H^0(\mathcal{O}_C)\otimes H^0(K\otimes E^*)\stackrel{f_{1}}\longrightarrow\ar[d]^{\mu}&
\!\!\!\!\!\!\!\!\!\!\!\!\!\!\!\!\!\!\!\!\!\!\!\!\!\!\!\!\!\!\!\!\!\!\!\!\!\!\!\!\!\!\!\!\!
\!\!\!\!\!\!\!\!\!\!\!\!\!\!\!\!\!\!\!\!\!\!\!\!\!\!\!\!\!\!
\!\!\!\!\!\!\!\!\!\!\!\!\!\!\!\!\!\!\!\!\!\!\!\!\!\!\!\!\!\!\!
H^0(E)\otimes H^0(K\otimes E^*)\stackrel{g_{1}}\longrightarrow \!\!\!\!\!\!\!\!\!\!\!\!\!\!\!\!\!\!\!\!\!\!\!\!\!\!\!\!\!\!\!\!\!\!\!\!\!\!\!\!\!\!\!\!\!
\!\!\!\!\!\!\!\!\!\!\!\!\!\!\!\!\!\!\!\!\!\!\!\!\!\!\!\!\!\!\!\!\!\!\!\!\!\!\!\!
\!\!\!\!\!\!\!\!\!\!\!\!\!\!\!\!\!\!\!\!\!\!
\ar[d]^{\mu^2_{E}}&V\otimes H^0(K\otimes E^*)\ar[d]_{\mu_{L, V}}\\
 H^0(\mathcal{O}_C\otimes K\otimes E^*)\ar[r]^{f_{2}}&
H^0(K\otimes E\otimes E^*)\quad\stackrel{g_{2}}\longrightarrow &\!\!\!\!\!\!\!\!\!\!\! H^0(K\otimes L\otimes E^*),&\!\!\!\!\!\!\!\!\!\!\!\!\!\!\!\!\!\!\!\!\!\!\!\!\!\!\!\!\\
}
\end{align}
in which the maps $f_{1}$ and $f_{2}$ are injective and $g_{1}$ is surjective. Observe furthermore that the map $\mu$ is an isomorphism. The snake lemma applied to this situation implies that
\begin{align}\label{diag3-1}
\dim \ker \mu^2_{E}=\dim \ker \mu_{L, V}.
\end{align}
According to the assumption concerning dimension of $X$, we obtain
$$\dim \ker \mu_{L, V}\geq (k-1)(2g-2-d+k-1).$$
Assuming $V=\langle v_1, \cdots, v_{k-1} \rangle$ and setting
$$V_i:=\langle v_1, \cdots, v_i \rangle \quad , \quad i=2, \cdots, k-1,$$
we would have
$\dim \ker \mu_{L, V_i}-\dim \ker\mu_{L, V_{i-1}}\leq h^0(K\otimes E^*).$
These together with the base point free pencil trick applied to the map
$$\mu_{L, V_2}:V_2\otimes H^0(K\otimes E^*)\rightarrow H^0(K\otimes L\otimes E^*),$$
implies $h^0(K\otimes E^*\otimes L^*(B))\geq 2(2g-2-d+k)-(k-1)$, where $B$ is the base locus of the sections of $V_2$. Note also that $0\leq \deg(B)\leq 1$. Therefore,
\begin{align}\label{diag3-2}
h^0(K\otimes E^*\otimes L^*)\geq 2(2g-2-d+k)-k.
\end{align}

If  $\frac{4g-3}{3}\leq d$ then, as $\deg(K\otimes E^*\otimes L^*)< 0$ and $K\otimes E^*\otimes L^*$ is stable, one has $h^0(K\otimes E^*\otimes L^*)=0$, which is in contradiction with inequality (\ref{diag3-2}).\\

  Recall that
$h^0(E\otimes L)=h^0(K\otimes E^*\otimes L^*)+3d-2(g-1)\geq 2(g-1)+d+k.$
Now if $d\leq \frac{4g-4}{3}$, then $\mu(E \otimes L)\leq 2g-3$. Observe furthermore that $E \otimes L$ is stable.
As a consequence of Propositions 3 and 4 of \cite{Re}, the
 Clifford theorem for vector bundles for such a this situation asserts that $h^0(E\otimes L)\leq \frac{\deg(E\otimes L)+\rk(E\otimes L)}{2}$,
  by which we obtain
$2(g-1)+d+k\leq 1+\frac{3d}{2}.$
Consequently we get $d+k\leq 0$, which is absurd.
\end{proof}
\begin{theorem}\label{dimension equality}
If 
$g\geq 5$, then
\begin{align}\label{diag3-3-1-1}
\dim B^2_{n, d}\leq n(n-1)(g-1)+d-3.
\end{align}
\end{theorem}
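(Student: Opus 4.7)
The plan is to mimic the inductive argument from Theorem \ref{bound theorem non-hyper complete}. I would suppose, by contradiction, that $d$ is the smallest integer for which an irreducible component $X \subseteq B^2_{n,d}$ satisfies $\dim X > n(n-1)(g-1)+d-3$. After replacing $X$ by a component of $B^3_{n,d}$ if necessary, a general $E \in X$ has $h^0(E)=2$; and a general $E$ must be base-point free, since a common base point $p$ would yield $E\otimes\mathcal{O}(-p) \in B^2_{n,d-n}$, which by minimality of $d$ lives in a family of dimension at most $n(n-1)(g-1)+d-n-3$, forcing $\dim X \leq n(n-1)(g-1)+d-n-2 < n(n-1)(g-1)+d-3$ for $n\geq 2$.

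The key input is Feinberg's characterization (Proposition \ref{feinberg proposition}), whose proof of Claim~I works verbatim for a rank $n$ bundle, since only the rank analysis of $\ker(e_V)$ is used. Hence either all global sections of $E$ lie in a line sub-bundle $L = \mathcal{O}(D) \subseteq E$ with $\delta := \deg D \geq 2$ and $D \in W^1_\delta$, or $E$ admits a nowhere vanishing global section giving $\mathcal{O}_C\hookrightarrow E$. In the first alternative, writing the resulting extension $0 \to \mathcal{O}(D) \to E \to F \to 0$ with $F$ of rank $n-1$ and degree $d-\delta$, and combining Martens' bound $\dim W^1_\delta \leq \delta-2$, $\dim U(n-1,d-\delta)=(n-1)^2(g-1)+1$, and $\dim \mathbb{P}\operatorname{Ext}^1(F,\mathcal{O}(D)) \leq (n-1)(g-1)+d-n\delta-1$ (obtained via Riemann-Roch under generic stability of $F$), one gets
\[
\dim X \leq n(n-1)(g-1)+d-(n-1)\delta-2 \leq n(n-1)(g-1)+d-2n,
\]
which contradicts $\dim X > n(n-1)(g-1)+d-3$ for $n \geq 2$.

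In the remaining alternative one has $0 \to \mathcal{O}_C \to E \to F \to 0$ with $F$ of rank $n-1$ and degree $d$, and the image $V$ of $H^0(E) \to H^0(F)$ is $1$-dimensional, so in particular $F \in B^1_{n-1,d}$. I would then set up the analogue of diagram \eqref{diag -11} with $\mathcal{O}_C$ on the left and $F$ (of rank $n-1$) in place of $L$ on the right, apply the snake lemma to obtain $\dim \ker \mu^2_E = \dim \ker \mu_{F,V}$ for the induced map $\mu_{F,V}\colon V \otimes H^0(K\otimes E^*) \to H^0(K\otimes F\otimes E^*)$, and bound $\dim\ker\mu_{F,V}$ via Riemann-Roch together with the Clifford-type inequality for vector bundles (Propositions~3 and 4 of \cite{Re}); the hypothesis $g \geq 5$ enters at this step to provide the slack required to close the numerical inequality. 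The main obstacle is precisely this last case: because $V$ is only one-dimensional, the base-point-free pencil trick used in Theorem \ref{bound theorem non-hyper complete} is unavailable, and one must instead extract the required bound on $\dim\ker\mu_{F,V}$ via a more delicate Clifford-type estimate applied to $F \in B^1_{n-1,d}$, with $g \geq 5$ handling the low-degree edge cases and producing the contradiction.
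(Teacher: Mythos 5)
Your handling of the first alternative (all sections of $E$ contained in a line sub-bundle, i.e.\ the ``first type'' case in the paper's terminology) matches the paper's argument: the same parameter count $\dim W^1_\delta+\dim U(n-1,d-\delta)+\dim\mathbb{P}\operatorname{Ext}^1(F,\mathcal{O}(D))$ with Martens' theorem and Riemann--Roch, yielding $\dim X\leq n(n-1)(g-1)+d-2-(n-1)\delta$ with $\delta\geq 2$. The extra scaffolding you add (induction on a minimal $d$, the base-point reduction) is harmless but unnecessary, since Feinberg's lemma already absorbs the base locus into the effective divisor $D$.

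The genuine gap is in the second alternative, which you explicitly leave open, calling it ``the main obstacle'' and gesturing at a ``more delicate Clifford-type estimate'' that you never carry out. No such estimate is needed, and the case is in fact the easy one. In the snake-lemma identity $\dim\ker\mu^2_E=\dim\ker\mu_{F,V}$ obtained from the analogue of diagram (\ref{diag -11}), the space $V$ is one-dimensional, spanned by a single nonzero section $v\in H^0(F)$. The map $\mu_{F,V}$ is then just multiplication by $v$, i.e.\ the map $H^0(K\otimes E^*)\to H^0(K\otimes F\otimes E^*)$ induced by the injection of sheaves $K\otimes E^*\hookrightarrow K\otimes F\otimes E^*$ (a nonzero map $\mathcal{O}_C\to F$ on an integral curve is injective, and tensoring with the locally free sheaf $K\otimes E^*$ and taking $H^0$ preserves injectivity). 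Hence $\mu_{F,V}$, and therefore the Petri map $\mu^2_E$, is injective, so the component $X$ is generically smooth of the expected dimension $\rho=n^2(g-1)+1-2\bigl(2-d+n(g-1)\bigr)$, which the paper observes lies below the asserted bound. Your plan to route this case through a Clifford inequality for $F\in B^1_{n-1,d}$ would not obviously close the numerical gap and, as written, does not constitute a proof; you need the injectivity observation above (or an equivalent substitute) to finish.
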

\begin{proof}
 Assume that $X$ is an irreducible component of $B^2_{n, d}$ and $E$ is a general element of $X$. Assume moreover, as in theorem \ref{bound theorem non-hyper complete}, that a general element $E\in X$ satisfies $h^0(E)=2$.
 Observe that, using a diagram as in diagram (\ref{diag -11}),
we can obtain an equality as (\ref{diag3-1}), by which,
 if $E$ turns out to be of second type, then
 $\mu^2_E$ would be injective. So $X$ has to be generically smooth and it has to have the expected dimension, which is certainly smaller than the claimed bound.

If a general element of $X$ turns to be of first type, then
$$\dim X\leq n(n-1)(g-1)+d-4.$$
Indeed, if a general element $E\in X$ admits a presentation as
$$0\rightarrow H \rightarrow E \rightarrow F \rightarrow 0,$$
where $H$ is a line bundle with $h^0(H)=2, \deg(H)=d_1$ and $\rk(F)=n-1$, then since the stable bundles deform to non-stable ones, we can assume in counting that $F$ is stable as well. So the dimension of the set of bundles as $F$, is bounded by $\dim U(n-1, d-d_1)=(n-1)^2(g-1)+1$.
Meanwhile, the line bundles as $H$ would vary in a subset $\mathcal{H}$ of $B^2_{1, d_1}$ and the Martens' theorem asserts that $\dim \mathcal{H}\leq d_1-2$ ($\dim \mathcal{H}$ can be $d_1-2$ if $C$ is hyper-elliptic and $\dim \mathcal{H}\leq d_1-3$ otherwise). Therefore the dimension of $X$ would be bounded
by
$$[d_1-2]+[(n-1)^2(g-1)+1]+(h^1(H\otimes F^*)-1).$$
Observe that $h^1(H\otimes F^*)=(n-1)(g-1)+d-nd_1$ by Riemann-Roch. Moreover $d_1\geq 2$ and so
$$\dim X\leq
n(n-1)(g-1)+d-2-d_1(n-1)\leq n(n-1)(g-1)+d-4,$$
 as required.
\end{proof}
%
Motivated by \cite[Theorem 1.2]{C-F-K}, one can sharpen the bound in Theorem \ref{bound theorem non-hyper complete} under some restrictions on the numbers $r, d$, as
\begin{theorem}\label{bound theorem non-hyper complete 2}
 Let $k$, $d$ be integers with $3\leq d\leq 2g-2-\frac{k}{2}$, $2\leq k\leq 2+\frac{d}{2}$.
Then,\\
 if $k\geq 3$, then
 $\dim B^k_{2, d}\leq 2g+d-4k.$
 While for $k=2$, the integer $d$ can vary in the set $\{3, \cdots, 2g-5  \}$ with the same bound for $\dim B^2_{2, d}$.
\end{theorem}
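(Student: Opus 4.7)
The plan is to split analysis of a general element $E$ in a potential offending component $X\subset B^k_{2,d}$ according to the first/second-type dichotomy of Definition \ref{definition 1}, rather than to iterate the Petri-Clifford chain used in Theorem \ref{bound theorem non-hyper complete}.

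Suppose first that a general $E\in X$ is of second type. By Lemma \ref{lemma1}, $E$ fits in a presentation $0\to\mathcal{O}(D)\to E\to L\to 0$ with $h^0(\mathcal{O}(D))=1$; the usual minimality-in-$d$ reduction forces $E$ to be globally generated, which makes the image $V\subset H^0(L)$ of $H^0(\pi)$ base-point-free. The diagram (\ref{diag -11}) identifies $\dim\ker\mu^2_E$ with $\dim\ker\mu_{L,V}$, and the argument indicated in the proof of Theorem \ref{dimension equality} (the $k=2$ case) should extend here to force $\mu^2_E$ to be injective. Hence $X$ is generically smooth of expected dimension $\rho(2,d,k)=4g-3-k(2g-2-d+k)$. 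A direct manipulation shows
\[
\rho(2,d,k)\le 2g+d-4k \iff (k-1)(2g-d)\ge 6-(k-3)^2,
\]
and one checks that the hypothesis $d\le 2g-2-\tfrac{k}{2}$ when $k\ge 3$, respectively $d\le 2g-5$ when $k=2$, is precisely what makes this inequality hold. This is the arithmetic explanation for the peculiar admissible range $\{3,\ldots,2g-5\}$ when $k=2$.

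Suppose instead that a general $E\in X$ is of first type. Then $E$ contains a line subbundle $\mathcal{O}(D)$ with $h^0(\mathcal{O}(D))=k$; in particular $d_1:=\deg D\ge 2k-2$ by Clifford's theorem. Counting pairs (subbundle in $W^{k-1}_{d_1}$, extension class in $\mathbb{P}H^1(\mathcal{O}(D)\otimes L^{-1})$) via Martens' theorem and Riemann-Roch yields $\dim X\le 2g+d-d_1-2k$. On non-hyperelliptic curves the strict Clifford bound $d_1\ge 2k-1$, together with the Martens-Mumford estimate $\dim W^{k-1}_{d_1}\le d_1-2k+1$, collapses this to $2g+d-4k$.

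The main obstacle is twofold. First, extending second-type injectivity of $\mu^2_E$ from the $k=2$ case of Theorem \ref{dimension equality} to arbitrary $k$ requires a finer analysis of $\mu_{L,V}$ when $V$ is a base-point-free linear series of dimension $k-1$. Second, in the first-type analysis on hyperelliptic curves, Clifford-equality permits $d_1=2k-2$ with $\mathcal{O}(D)=(k-1)g^1_2$, and the naive Martens bound then leaves a slack of two; recovering this codimension from stability of $E$ and from the at-most-one-base-point condition on the quotient pencil is the technical heart of the argument.
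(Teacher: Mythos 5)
Your proposal does not follow the paper's route: the paper simply reruns the proof of Theorem \ref{bound theorem non-hyper complete} under the stronger hypothesis $\dim X\geq 2g+d-4k+1$, and the extra restriction $d\leq 2g-2-\frac{k}{2}$ (resp.\ $d\leq 2g-5$ for $k=2$) is exactly what keeps the final Clifford-type numerical contradiction alive. Your first/second-type dichotomy is a genuinely different decomposition, and your arithmetic observation that $\rho(2,d,k)\leq 2g+d-4k$ is equivalent to $(k-1)(2g-d)\geq 6-(k-3)^2$ is correct and does explain the shape of the hypotheses. But the proposal has a genuine gap at its central step. For a second-type bundle the identification $\dim\ker\mu^2_E=\dim\ker\mu_{L,V}$ reduces everything to the multiplication map $\mu_{L,V}:V\otimes H^0(K\otimes E^*)\to H^0(K\otimes L\otimes E^*)$ with $\dim V=k-1$. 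For $k=2$ this is multiplication by a single nonzero section, hence injective --- that is the whole content of the $k=2$ argument in Theorem \ref{dimension equality}, and it does not extend. For $k\geq 3$ the base-point-free pencil trick already identifies $\ker\mu_{L,V_2}$ with $H^0(K\otimes E^*\otimes L^{-1}(B))$, which is typically nonzero in the degree range at hand; the entire point of the proof of Theorem \ref{bound theorem non-hyper complete} is to \emph{bound} this kernel and extract a contradiction from Re's Clifford theorem applied to $E\otimes L$, not to show the kernel vanishes. So "second type implies $X$ has expected dimension" is not something you can hope to prove; without it your reduction to the inequality $(k-1)(2g-d)\geq 6-(k-3)^2$ never gets off the ground.

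The first-type branch is also incomplete as you acknowledge: the count $\dim X\leq 2g+d-d_1-2k$ with $d_1\geq 2k-2$ only yields $2g+d-4k+2$, and the theorem carries no non-hyperelliptic hypothesis, so you cannot invoke the strict Clifford bound $d_1\geq 2k-1$ or the Mumford refinement of Martens in general. (Note that the paper reserves this kind of subbundle-plus-extension count for Theorem \ref{dimension equality} and Corollary \ref{Mumford thm for rank 2}, where the hyperelliptic case is either absorbed into a weaker bound or excluded by hypothesis.) To repair your proof you would have to either close both gaps or, more realistically, abandon the dichotomy and instead push the paper's kernel-filtration argument: assume $\dim X\geq 2g+d-4k+1$, propagate the resulting lower bound on $\dim\ker\mu_{L,V}$ through the chain $V_2\subset\cdots\subset V_{k-1}$ to get a lower bound on $h^0(K\otimes E^*\otimes L^*)$, and check that the hypotheses $d\leq 2g-2-\frac{k}{2}$ (and $d\leq 2g-5$ when $k=2$) force the Clifford inequality for the stable bundle $E\otimes L$ to fail.
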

\begin{proof}
The argument of proof of theorem \ref{bound theorem non-hyper complete} goes through to deduce the result. Notice that the further restriction
on $d$ in the case $k=2$ was needed to be imposed, because the quantity $2g+d-4k$ turns out to be smaller than the expected dimension for $2g-4\leq d\leq 2g-2$.
\end{proof}
\subsection{The case of canonical determinant}
\begin{theorem}\label{symmetric bound theorem non-hyper}
For an integer $k$ with $2\leq k\leq g+1$, any irreducible component $X$ of $B_{2, K}^k$ satisfies
\begin{align}\label{bound inequality, symmetric}
\dim X\leq 3g-2k-2.
\end{align}
\end{theorem}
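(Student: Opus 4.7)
The plan is to parallel Theorem \ref{bound theorem non-hyper complete}, using the symmetric Petri map (\ref{diag2}) in place of $\mu^2_E$, Lemma \ref{lemma1} in place of Atiyah's theorem, and the filtration (\ref{diag3-00}) of $S^2 E$ in place of the diagram (\ref{diag -11}). Fix an irreducible component $X$ and a general $E \in X$ with $h^0(E)=k$. Since $\det E = K$, one has $\mathrm{End}_0(E) \cong S^2 E \otimes K^{-1}$ and hence $T_E SU_C(2,K)^{*} \cong H^0(S^2 E)$ by Serre duality, so
\[
\dim X \;\leq\; 3g-3 \;-\; \tfrac{k(k+1)}{2} \;+\; \dim \ker \mu^0_{s,E}.
\]
The target bound (\ref{bound inequality, symmetric}) is therefore equivalent to $\mathrm{rank}(\mu^0_{s,E}) \geq 2k - 1$, and this is what I will prove.

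Next, apply Lemma \ref{lemma1} to write
\[
0 \longrightarrow \mathcal{O}(D) \longrightarrow E \longrightarrow K(-D) \longrightarrow 0,
\]
with $h^0(\mathcal{O}(D)) \in \{1, k\}$, and set $V := \im(H^0(E) \to H^0(K(-D)))$ of dimension $k - h^0(\mathcal{O}(D))$. In the canonical-determinant setting the filtration (\ref{diag3-00}) has graded pieces $2\mathcal{O}(D)$, $\mathcal{O}(D)\otimes K(-D) = K$ and $2K(-D)$, yielding a filtration $0 \subset H^0(2\mathcal{O}(D)) \subset H^0(E^1) \subset H^0(S^2 E)$ whose successive quotients embed into $H^0(2\mathcal{O}(D))$, $H^0(K)$ and $H^0(2K(-D))$. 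Paired with the natural filtration $0 \subset S^2 H^0(\mathcal{O}(D)) \subset H^0(\mathcal{O}(D)) \cdot H^0(E) \subset S^2 H^0(E)$ on the source, $\mu^0_{s,E}$ becomes a morphism of filtered vector spaces, yielding the standard lower bound $\mathrm{rank}(\mu^0_{s,E}) \geq \sum_{i=1}^{3} \mathrm{rank}(\mathrm{gr}_i\,\mu^0_{s,E})$.

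In the first-type case $V = 0$ and only the bottom graded piece contributes: the line-bundle multiplication $S^2 H^0(\mathcal{O}(D)) \to H^0(2\mathcal{O}(D))$ has rank $\geq 2k - 1$ by Castelnuovo's lemma, applied after removing the base locus of $|\mathcal{O}(D)|$. In the second-type case, $h^0(\mathcal{O}(D)) = 1$ and $\dim V = k - 1$; the bottom piece sends $s_D^{2}$ to a nonzero section of $2\mathcal{O}(D)$ (rank $1$), the middle piece is the injection $s_D\cdot(\cdot)\colon V \hookrightarrow H^0(K)$ (rank $k - 1$, since multiplication by the nonzero section $s_D$ is injective on sections), and the top piece is the multiplication $S^2 V \to H^0(2K(-D))$, of rank $\geq 2k - 3$ by Castelnuovo applied to $V$. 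Summing gives $\mathrm{rank}(\mu^0_{s,E}) \geq 3k - 3 \geq 2k - 1$ for $k \geq 2$.

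The most delicate step is the top graded piece in the second-type case when $V$ has base points or when $\dim V \leq 1$. In the former, one subtracts the base locus $B$ of $V$ and applies Castelnuovo to $V \subset H^0(K(-D-B))$, noting that the product factors through $H^0(2K(-D-B)) \hookrightarrow H^0(2K(-D))$ without loss of rank; in the latter ($k = 2$), a direct calculation replaces Castelnuovo and still delivers the required rank $1$ on the top piece, summing to $3 = 2k-1$. Handling these boundary cases, together with a careful verification that the three graded pieces of the source actually map into the three prescribed graded pieces of the target, is the technical heart of the argument.
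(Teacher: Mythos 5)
Your argument is correct, and for the second-type case it is essentially the paper's own proof in different packaging: the two snake-lemma diagrams (\ref{diag9}) and (\ref{diag11}), built from the filtration (\ref{diag3-00}), are exactly your two filtration steps, and the paper's chain of kernel inclusions $\dim\ker\mu^0_{s,E}\leq\dim\ker\mu\leq\dim\ker\mu^0_{s,V,L}$ is the same information as your bound $\rk\mu^0_{s,E}\geq\sum_i\rk(\mathrm{gr}_i\,\mu^0_{s,E})$; the only difference there is that the paper bounds the top graded piece by $\rk(S^2V\to H^0(L^{\otimes 2}))\geq\dim V$ (multiplication by a fixed nonzero $v_0$) where you invoke the stronger Hopf-type bound $\geq 2\dim V-1$, and either suffices. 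The genuine divergence is the first-type case: the paper rules out $\dim X\geq 3g-2k-1$ there by a parameter count on extensions $0\to\mathcal{O}(D)\to E\to K(-D)\to 0$ with $h^0(\mathcal{O}(D))=k$, using stability to force $\deg D\leq g-2$ and then Martens' theorem; you instead note that $\mu^0_{s,E}$ restricted to $S^2H^0(\mathcal{O}(D))=S^2H^0(E)$ factors through the injection $H^0(\mathcal{O}(2D))\hookrightarrow H^0(S^2E)$ and already has rank $\geq 2k-1$ by the multiplication bound for the line bundle $\mathcal{O}(D)$. Your route is more uniform (one mechanism handles both types, with no appeal to Martens' theorem), and as a bonus yields $\rk\mu^0_{s,E}\geq 3k-3$ for second-type bundles, which is strictly better than what the theorem needs once $k\geq 3$. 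Two small remarks: the bilinear-map (Hopf) bound does not actually require removing base loci, since sections of line bundles on an integral curve have no zero-divisors, so your ``delicate'' boundary discussion is dispensable; the step that genuinely requires the routine verification you flag is that $\mu^0_{s,E}$ carries $H^0(\mathcal{O}(D))\cdot H^0(E)$ into $H^0(E^1)$, which holds because $E^1$ is by definition the image of $\mathcal{O}(D)\otimes E$ in $S^2E$.
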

\begin{proof}
Let $X$ be an irreducible component of $B_{2, K}^k$ and a general element $E$ of $X$ satisfies $h^0(E)=k$.
 Assume that a general member $E\in X$
is of second type
  and set $\gamma:=\dim X$. Then, one has
\begin{align}
3g-3-(\dim \im \mu^0_{s, E})\geq \gamma,
\end{align}
  where $\mu^0_{s, E}$ is the symmetric Petri map associated to $E$ as in (\ref{diag2}). So
\begin{align}\label{diag31}
\dim \ker \mu^0_{s, E}\geq \gamma +\frac{k(k+1)}{2}+3-3g.
\end{align}
The exact sequence
$0\rightarrow \mathcal{O}(2D)\rightarrow S^2E\rightarrow \frac{S^2E}{\mathcal{O}(2D)} \rightarrow 0,$
arising from the exact sequence (\ref{diag4}),
 gives rise to a commutative diagram as
\begin{align}\label{diag9}
\xymatrix {
0 \rightarrow S^2H^0(\mathcal{O}(D))\ar[r]\ar[d]^{\mu^0_{s, D}}&S^2H^0(E)\ar[d]^{\mu^0_{s, E}} \ar[r]&\frac{S^2H^0(E)}{S^2H^0(\mathcal{O}(D))} \ar[d]^{\mu}\rightarrow 0\\
0 \rightarrow H^0(\mathcal{O}(2D))\ar[r]&H^0(S^2E)\ar[r]&H^0(\frac{S^2E}{\mathcal{O}(2D)})\rightarrow ...
}
\end{align}
Since $S^2H^0(\mathcal{O}(D))=\mathbb{C}$, the map
 $\mu^0_{s, D}$
turns to be injective. This together with the snake lemma gives an inequality as
\begin{align}\label{diag9-1}
\dim \ker \mu \geq \dim \ker \mu^0_{s, E}.
\end{align}
Therefore, using the inequality (\ref{diag31}) we obtain
\begin{align}\label{diag10}
\dim \ker \mu \geq \gamma +\frac{k(k+1)}{2}+3-3g.
\end{align}
Let $V$ be as in the proof of Theorem \ref{bound theorem non-hyper complete} and observe by effectiveness of $D$ that the vector space $V$ can be considered as a subspace of
$H^0(\mathcal{O}(D)\otimes L)$. Similar to the previous argument, the exact sequence
$$ 0\rightarrow \mathcal{O}(D)\otimes L\rightarrow \frac{S^2E}{\mathcal{O}(2D)}\rightarrow 2L \rightarrow 0,$$
as well arising from the exact sequence (\ref{diag5}),
together with the equality
$$S^2H^0(E)
=S^2V\oplus V\oplus \mathbb{C},$$
leads to the following commutative diagram of bundles
\begin{align}\label{diag11}
\xymatrix {
0 \rightarrow V\ar[r]\quad \ar[d]^{\theta}&\frac{S^2H^0(E)}{S^2H^0(\mathcal{O}(D))}\ar[d]^{\mu} \ar[r]&\quad S^2V \ar[d]^{\mu^0_{s, V, L}} \rightarrow 0\\
0 \rightarrow H^0(\mathcal{O}(D)\otimes L)\ar[r]&H^0(\frac{S^2E}{\mathcal{O}(2D)})\ar[r]&\quad H^0(2L)\rightarrow \cdots,}
\end{align}
where $\mu^0_{s, V, L}$ is the symmetric Petri map of $L$ restricted to $S^2V$ and $\theta$ is the inclusion map.
Once again, as a consequence of the injectivity of $\theta$ and the snake lemma, we obtain
\begin{align}\label{diag9-2}
\dim \ker \mu^0_{s, V, L} \geq \dim \ker \mu,
\end{align}
by which together with (\ref{diag10}) an inequality as
\begin{align}\label{diag12}
\dim \ker \mu^0_{s, V, L} \geq \gamma +\frac{k(k+1)}{2}+3-3g
\end{align}
would be obtained.
This, in combination with $\dim \ker \mu^0_{s, V, L} \leq \dim S^2V-\dim V$, implies
\begin{align}\label{diag12-1}
\gamma \leq 3g-2k-2,
\end{align}
as required.

Finally if $\dim X\geq 3g-2k-1$ then
 a general member $E$ of $X$ fails to be of first type. Indeed otherwise, assume that a general member $E\in X$ admits a presentation as
\begin{align}
0\longrightarrow \mathcal{O}(D) \stackrel{i}\longrightarrow E \stackrel{\pi}\longrightarrow K\otimes \mathcal{O}(-D) \rightarrow 0,
\end{align}
 with $\deg(D)=t$. Then, the stability of $E$ implies that $t\leq g-2$ and we would have
 $$\dim B^{k}_{1,t}+h^1(\mathcal{O}(D)\otimes L^{-1})-1\geq 3g-2k-1.$$
This, since $h^0(\mathcal{O}(D)\otimes L^{-1})=0$ by stability of $E$, implies that
$$\dim B^{k}_{1, t}\geq 2t-2k+3,$$
which is absurd by Martens' theorem.
\end{proof}
\section{Remarks and Corollaries}
\begin{corollary}\label{Mumford thm for rank 2}(\textbf{Mumford's Theorem for rank two bundles})
If $C$ is non-hyper elliptic of genus $g\geq 19$ and if for some $k, d$ with
$0< 2k-2\leq d\leq 2g-\frac{3}{2}k-\frac{7}{2}$ one had $\dim B_{2, d}^k=2g+d-4k$, then either $C$ is trigonal, or bi-elliptic, or a smooth plane quintic.
\end{corollary}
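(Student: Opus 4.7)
Argue by contrapositive: assume $C$ is non-hyperelliptic of genus $g \geq 19$ and is neither trigonal, bi-elliptic, nor a smooth plane quintic, and aim to show $\dim B^k_{2,d} < 2g + d - 4k$. Under this hypothesis, Mumford's classical theorem for line bundles strengthens Martens' bound to $\dim W^r_d \leq d - 2r - 2$ for every $r \geq 1$; the idea is to transport this sharper bound back to rank two via the determinant map $E \mapsto \det E$.

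Let $X$ be a component of $B^k_{2,d}$ with $\dim X = 2g + d - 4k$. By the opening reduction of the proof of Theorem \ref{bound theorem non-hyper complete}, a general $E \in X$ is globally generated, has $h^0(E) = k$, and admits a presentation
\begin{align*}
0 \longrightarrow \mathcal{O}_C \longrightarrow E \longrightarrow L \longrightarrow 0, \qquad L = \det E,
\end{align*}
with the $(k-1)$-dimensional subspace $V = H^0(E)/H^0(\mathcal{O}_C)$ injecting into $H^0(L)$. Thus the determinant map $\Psi : X \to \Pic^d(C)$ lands in $W^{k-2}_d$. For a generic $L \in \Psi(X)$ with $h^0(L) = k-1$, the fiber is the projectivized space of extension classes $\xi \in H^1(L^{-1})$ whose coboundary $H^0(L) \to H^1(\mathcal{O}_C)$ vanishes; Serre duality rewrites this as the annihilator inside $H^0(K \otimes L)^{*}$ of the image of the multiplication $\mu_L : H^0(L) \otimes H^0(K) \to H^0(K \otimes L)$. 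For $k \geq 3$, applying the base-point-free pencil trick to a general pencil $V_2 \subset H^0(L)$ gives $\rk(\mu_L) \geq g + d - k + 2$, so the generic fiber has dimension at most $k - 4$.

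Combining the sharpened Mumford bound with this fiber estimate yields
\begin{align*}
\dim X \;\leq\; \dim \Psi(X) + (k-4) \;\leq\; (d - 2k + 2) + (k-4) \;=\; d - k - 2.
\end{align*}
The hypotheses $2k-2 \leq d \leq 2g - \tfrac{3}{2}k - \tfrac{7}{2}$ force $k \leq (4g-3)/7$, and then $g \geq 19$ yields $3k - 2 < 2g$, so $d - k - 2 < 2g + d - 4k$, contradicting the assumption on $\dim X$. The principal obstacle is to rule out the degenerate strata of $\Psi$ under which the above estimate fails: a generic $L$ acquiring base points (to be absorbed by the inductive reduction $L \rightsquigarrow L(-B)$ used in the proof of Theorem \ref{bound theorem non-hyper complete}); the alternative $h^0(L) \geq k$ (forcing $\Psi(X) \subset W^{k-1}_d$ and Mumford applied at the shifted index $k-1$); a Petri-type failure of the pencil-trick rank bound on $\mu_L$ (itself known to characterize the three named curves); and the boundary case $k = 2$, where $V$ contains no pencil and a direct application of the Petri inequality $h^0(K \otimes E^{*} \otimes L^{-1}) \geq k + 4g - 4 - 2d$ from the proof of Theorem \ref{bound theorem non-hyper complete 2} replaces the fiber estimate. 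The numerical threshold $g \geq 19$ is calibrated precisely so that each of these degenerations is incompatible with $\dim X = 2g + d - 4k$ except when $C$ is trigonal, bi-elliptic, or a smooth plane quintic.
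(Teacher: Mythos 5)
Your route (push $X$ down to $W^{k-2}_d$ via the determinant map and bound the fibers by extension classes) is genuinely different from the paper's, which splits into the first/second type dichotomy of Lemma \ref{lemma1}: for first-type bundles it applies Mumford's line-bundle theorem to the \emph{sub}-line bundle $\mathcal{O}(D)$ with $h^0(\mathcal{O}(D))=k$, and for second-type bundles it twists down by general points and invokes Teixidor's dimension theorems for $B^2_{2,d'}$ and $B^3_{2,d'}$. But your argument has a decisive gap at its central step: the bound $\dim \Psi(X)\leq \dim W^{k-2}_d\leq d-2(k-2)-2$ is an application of Mumford's theorem outside its range of validity. Mumford's refinement of Martens' theorem requires (besides $k-2\geq 1$) that $d\leq g-2$, whereas the corollary allows $d$ up to $2g-\tfrac{3}{2}k-\tfrac{7}{2}$, which exceeds $g-2$ for most admissible $k$ when $g\geq 19$. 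Indeed, once $d\geq g+k-2$ Riemann--Roch gives $W^{k-2}_d=\Pic^d(C)$, of dimension $g$, regardless of whether $C$ is trigonal, bi-elliptic or a plane quintic; replacing your bound by $\dim\Psi(X)\leq g$ yields only $\dim X\leq g+k-4$, and the required inequality $g+k-4<2g+d-4k$ fails (using $d\geq 2k-2$) as soon as $3k\geq g+2$, which is permitted since the hypotheses only force $k\leq (4g-3)/7$. So the contradiction you derive evaporates exactly where the trichotomy is supposed to enter.

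Beyond this, the proof defers its real content to the closing list of ``obstacles'': the fiber estimate $k-4$ is negative for $k=2,3$ (so those cases, including the base case $k=2$ of the statement, need a separate argument that is only gestured at); the stratification by $h^0(L)\geq k$ and by base loci of $L$ is acknowledged but not carried out, and it changes both the Mumford index and the fiber count simultaneously; and the assertion that failure of the pencil-trick rank bound on $\mu_L$ ``is known to characterize the three named curves'' is not a standard fact --- that failure measures $h^0(K\otimes L^{-1}(B))$ exceeding its Riemann--Roch value, which is not the Mumford trichotomy. As written, the place where trigonality, bi-ellipticity or the plane quintic structure is actually extracted is never reached, so the proposal does not establish the corollary.
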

\begin{proof}
Assume that $X$ is an irreducible component of $B_{2, d}^k$ with $\dim X=2g+d-4k$.
If a general
element $E\in X$ is of first type and has $k$ number of independent sections, then one has $\dim B_{1, t}^{k}\geq 2(t-2k+1)$ for some
integer $t$ with $0<2(k-1)\leq t\leq g-2$. This, by Mumford's theorem, might occur only if $t-2k+1=0$ by which the equality $\dim B_{1, 2k-1}^{k}=0$ holds. So $\mathcal{O}(D)\in B_{1, 2k-1}^{k}$, which may happen only in the case that
either $C$ is trigonal, or bi-elliptic, or a smooth plane quintic.

\textit{Claim II:}
If $E$ fails to be of first type, then for general points $p_1, \cdots p_{[\frac{k-2}{2}]}\in C$, the stable vector bundle $E(-p_1-\cdots -p_{[\frac{k-2}{2}]})$
would fail to admit an extension of first type.

\textit{Proof of Claim II:}  Assume first that $k$ is even. If
the stable vector bundle $E(-p_1-\cdots -p_{[\frac{k-2}{2}]})$
turns to be of first type, then
there exists a set of line bundles $H$ with $h^0(H)\geq 2$ and $\deg{H}\leq g-2$. Tensoring $H$ with $\mathcal{O}(p_1+\cdots +p_t)$ for general points $p_1+\cdots +p_t$, if necessary, we can assume that $H\in B^2_{1, g-2}$. Therefore we obtain $\dim B^2_{1, g-2}\geq 2g+d-4k+[\frac{k-2}{2}]$. This by Martens' theorem implies that $7k-8\geq 2g+2d$. On the other hand, the inequalities $2k-2\leq d$ and $2k-2\leq 2g-\frac{3}{2}k-\frac{7}{2}$ imply $4k\leq 2d+4$ and $3k\leq \frac{12}{7}g-\frac{9}{7}$, respectively. Summing up all the inequalities we obtain $g\leq 18$, which is absurd. If $k$ is an odd number, then the argument goes verbatim to prove the claim by replacing $B^3_{1, g-2}$ with
$B^2_{1, g-2}$. So the \textit{Claim II} is established.

If a general bundle $E\in X$ turns to be of second type and if $k=2n$, then the scheme
 $B_{2, d-2[\frac{k-2}{2}]}^{2}$ contains a subset $Y$ which is at least of dimension $2g+d-4k+[\frac{k-2}{2}]$ and its general member is a vector bundle of second type.
According to the work of M. Teixidor in \cite{T. 1} such a subset $Y$, if non-empty, is of expected dimension and the expected dimension is strictly smaller than
$2g+d-4k+[\frac{k-1}{2}]$ for $d$ in the given range. This is a contradiction.

 If $k=2n+1$, with similar assumption on $E$ the scheme
 $B_{2, d-2[\frac{k-2}{2}]}^{3}$ would contain a subset $Y$
which is at least of dimension $2g+d-4k+[\frac{k-2}{2}]$ and its general member is a vector bundle of second type.
 This possibility can be excluded by another work of M. Teixidor in \cite{T. 2}.
\end{proof}
\begin{corollary}
The scheme $B^2_{2, K}$ is reduced and irreducible of dimension $3g-6$.
\end{corollary}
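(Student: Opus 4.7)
The plan is to combine the upper bound $\dim B_{2,K}^{2}\le 3g-6$ (immediate from Theorem~\ref{symmetric bound theorem non-hyper} with $k=2$) with an irreducible dominating family of the expected dimension, and then verify reducedness via a tangent space computation. First, I would construct the extension family. Every globally generated rank two bundle with $\det E=K$ and $h^0(E)\ge 2$ sits, by Atiyah's theorem, in an exact sequence
\begin{equation*}
0\longrightarrow \mathcal{O}_C\longrightarrow E\longrightarrow K\longrightarrow 0.
\end{equation*}
Such extensions are parametrised by $\mathbb{P}\operatorname{Ext}^{1}(K,\mathcal{O}_C)=\mathbb{P}H^{1}(-K)$, of dimension $3g-4$. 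The condition $h^0(E_\xi)\ge 2$ is equivalent to the coboundary $\partial_\xi\colon H^0(K)\to H^1(\mathcal{O}_C)$ having a nonzero kernel, i.e.\ to the existence of $s\in H^0(K)$ with $s\cup\xi=0$. Hence the relevant locus $Z\subset \mathbb{P}H^1(-K)$ is the image of the incidence variety
\begin{equation*}
I=\{(s,[\xi])\in \mathbb{P}H^0(K)\times \mathbb{P}H^1(-K)\colon s\cup\xi=0\},
\end{equation*}
which is a projective bundle over $\mathbb{P}H^0(K)\cong \mathbb{P}^{g-1}$ with fibres of dimension $2g-4$, hence irreducible of dimension $3g-5$. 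Since for a general $[\xi]\in Z$ one has $h^0(E_\xi)=2$, the fibre of $I\to Z$ is generically a point, so $Z$ is irreducible of dimension $3g-5$.

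Next, the natural rational map $Z\dashrightarrow B_{2,K}^{2}$ sending $[\xi]$ to $[E_\xi]$ has generic fibre $\mathbb{P}^1$, corresponding to the choice of a nowhere vanishing section of $E_\xi$ up to scaling. Hence its image $W\subseteq B_{2,K}^{2}$ is irreducible of dimension exactly $3g-6$, which matches the upper bound from Theorem~\ref{symmetric bound theorem non-hyper}. I would then show that every irreducible component of $B_{2,K}^{2}$ equals $\overline{W}$. For this, let $X$ be any component, so $\dim X\le 3g-6$. By the argument of the last paragraph of the proof of Theorem~\ref{symmetric bound theorem non-hyper} (combined with Martens' theorem), a general $E\in X$ cannot be of first type — the first type locus contributes dimension at most $3g-8$. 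Hence a general $E\in X$ is of second type, so by Proposition~\ref{feinberg proposition} applied to $F=$ gcd of the zeros of sections of $E$, the twist $E(-F)$ has a nowhere vanishing section. A separate dimension count (varying $F$ in $C^{(f)}$ and the bundle $E(-F)\in B_{2,K(-2F)}^{2}$) shows that the sublocus with $F>0$ has strictly smaller dimension than $3g-6$; thus a general $E\in X$ itself has $F=0$, lies in $W$, and $X=\overline{W}$. This gives irreducibility and $\dim B_{2,K}^{2}=3g-6$.

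Finally, for reducedness, I would check generic smoothness. At a general $E\in W$, the tangent space to $B_{2,K}^{2}$ has dimension $3g-3-\operatorname{rk}\mu^0_{s,E}$, so it suffices to show that the symmetric Petri map $\mu^0_{s,E}\colon S^2H^0(E)\to H^0(S^2E)$ is injective for such $E$. Using the diagrams (\ref{diag9}) and (\ref{diag11}) from the proof of Theorem~\ref{symmetric bound theorem non-hyper} with $\mathcal{O}(D)=\mathcal{O}_C$, injectivity of $\mu^0_{s,E}$ reduces, via the snake lemma, to injectivity of the restricted map $\mu^0_{s,V,L}\colon S^2V\to H^0(2K)$, where $V\subset H^0(K)$ is two-dimensional; this in turn is a transversality statement for a general extension class $\xi\in Z$ and follows from a base-point-free pencil trick argument. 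Generic injectivity of $\mu^0_{s,E}$ then makes $B_{2,K}^{2}$ smooth at a general point of $W$ of dimension $3g-6$; since $B_{2,K}^{2}$ is a determinantal subscheme of $SU(2,K)$ of the expected codimension $3$, it is Cohen-Macaulay, and generic reducedness together with the $S_1$ property forces reducedness.

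The main obstacle is step three, namely eliminating non-globally-generated components: one must confirm that twisting down by a nontrivial base divisor $F$ strictly drops the dimension, which requires a bound on $\dim B_{2,K(-2F)}^{2}$ that is not directly stated in the paper for non-canonical determinants and must be extracted by adapting the proof of Theorem~\ref{symmetric bound theorem non-hyper} (or by an \emph{ad hoc} count of $(F,E(-F))$-pairs) to show that this locus is of dimension at most $3g-7$.
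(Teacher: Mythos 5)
Your reducedness and dimension-bound steps track the paper reasonably well (the upper bound from Theorem \ref{symmetric bound theorem non-hyper}, generic injectivity of the symmetric Petri map, and $R_0+S_1\Rightarrow$ reduced), but your irreducibility argument has a genuine gap that you yourself flag and that is in fact the crux of the matter. To conclude $X=\overline{W}$ for \emph{every} component $X$, you must show that a general $E\in X$ is globally generated, i.e.\ that in the Feinberg extension $0\to\mathcal{O}(D)\to E\to K(-D)\to 0$ of second type one actually has $D=0$; being of second type only gives $h^0(\mathcal{O}(D))=1$ with $D$ effective, not $D=0$. The required estimate --- that the locus of $E$ with base divisor $F>0$ has dimension $\le 3g-7$ --- is nowhere in the paper for non-canonical determinant twists and is not supplied in your proposal; as written, a component whose general member has a base point is not excluded, and the identification of $X$ with $\overline{W}$ collapses. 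The paper avoids this entirely: since $B^2_{2,K}$ has the expected dimension $3g-6$, it cites \cite{T.3} for the fact that such a scheme can be reducible only if its singular locus has codimension $\le 1$; combining $\Sing B^2_{2,K}=B^3_{2,K}$ (from Petri injectivity off $B^3_{2,K}$) with $\dim B^3_{2,K}\le 3g-8$ from Theorem \ref{symmetric bound theorem non-hyper} then forces irreducibility with no extension-space parametrization and no global generation analysis at all. If you want to keep your explicit family $W$, you must either carry out the missing count for the base-locus stratum or replace that step by the connectedness argument.

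Two smaller points. First, in your Petri computation for $k=2$ the space $V=\im\bigl(H^0(E)\to H^0(K)\bigr)$ is \emph{one}-dimensional (the kernel is $H^0(\mathcal{O}_C)$), not two-dimensional; consequently $\mu^0_{s,V,L}$ is multiplication by the square of a single nonzero section and its injectivity is immediate --- no base-point-free pencil trick is needed, and this is exactly how the paper argues. You should also treat the first-type case separately (there the quotient $S^2H^0(E)/S^2H^0(\mathcal{O}(D))$ vanishes and injectivity comes from $\mu^0_{s,D}$), since first-type bundles, while not generic in a component of dimension $3g-6$, can still occur and must be accounted for when you claim $\Sing B^2_{2,K}=B^3_{2,K}$. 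Second, your assertion that the general fibre of $I\to Z$ is a point (equivalently $h^0(E_\xi)=2$ for general $\xi\in Z$) is stated circularly and would need an independent justification if the family $W$ is to carry the weight of the dimension count; in the paper the equality $\dim B^2_{2,K}=3g-6$ comes for free from the determinantal lower bound together with the upper bound of Theorem \ref{symmetric bound theorem non-hyper}.
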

\begin{proof}
The upper bound $3g-6$ on the dimension is obvious by theorem \ref{symmetric bound theorem non-hyper}.
If $E\in B^2_{2, K}\setminus B^3_{2, K}$, then the petri map $\mu^0_{2, K, E}$ turns to be injective. Indeed, if $E$ is a bundle of first type, then using diagram (\ref{diag9}),
 since $\frac{S^2H^0(E)}{S^2H^0(\mathcal{O}(D))}$ vanishes,
the Petri map $\mu^0_{s, E}$ would be injective. While if $E$ is of second type, since $S^2V$ is one dimensional, then $\mu^0_{s, V, L}$ is injective and so the map $\mu$ is injective by (\ref{diag9-2}). This together with (\ref{diag9-1}) implies that
the Petri map $\mu^0_{s, E}$ is again injective. So we obtain
\begin{align}\label{50}
\Sing B^2_{2, K}=B^3_{2, K}.
\end{align}
Since $B^2_{2, K}$ is of expected dimension, so it might be reducible only if its singular locus is, by \cite{T.3}, of codimension $\leq 1$; i.e. $\dim B^3_{2, K}\geq 3g-7$, by (\ref{50}).
This is a contradiction, because by Theorem (\ref{symmetric bound theorem non-hyper}) the locus $ B^3_{2, K}$ is of dimension at most $3g-8$.

Since, again by theorem \ref{symmetric bound theorem non-hyper}, no irreducible component of $B^2_{2, K}$ is contained entirely in $B^3_{2, K}$, so $B^2_{2, K}$ would be reduced.
\end{proof}

Using Lemma \ref{Mumford Thm for Fixed determinantals}, the bound in theorem \ref{symmetric bound theorem non-hyper} can be sharpened for odd values of $k$.
\begin{lemma}\label{Mumford Thm for Fixed determinantals}
 If $\mathcal{L}$ is a globally generated line bundle on $C$ with $h^0(\mathcal{L})=s+1$, then the set of vector bundles of second type $E\in B^k_{2, d, \mathcal{L}}$ ($k=1, 2$),
 if non-empty, is of dimension at most
 $s+d-4 $, (res. at most of dimension $\frac{d}{2}+2(s-3)$), if $k=2$ (res. if $k=3$).
\end{lemma}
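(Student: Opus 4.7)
The plan is to parameterize second-type bundles in $B^k_{2, d, \mathcal{L}}$ by their canonical extension data and count dimensions, exploiting the fact that global generation of $\mathcal{L}$ constrains the subspace of lifted sections. For such an $E$, Lemma \ref{lemma1} combined with the proof of Proposition \ref{feinberg proposition} provides a canonical exact sequence
\begin{align*}
0 \longrightarrow \mathcal{O}(D) \longrightarrow E \longrightarrow L \longrightarrow 0,
\end{align*}
in which $D$ is the greatest common divisor of zeros of the sections of $E$, $h^0(\mathcal{O}(D))=1$, and $L=\mathcal{L}\otimes\mathcal{O}(-D)$. Setting $V:=\im(H^0(E)\to H^0(L))$, one has $\dim V\geq k-1$, and effectiveness of $D$ together with global generation of $\mathcal{L}$ identifies $V$ with a $(k-1)$-dimensional subspace of $H^0(\mathcal{L})$.

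I would stratify the parameter space by $t:=\deg D$ and parameterize triples $(D,V,\epsilon)$, where $D\in C^{(t)}$, $V\in\Grass(k-1,H^0(\mathcal{L}(-D)))$, and $\epsilon\in\mathbb{P}H^1(\mathcal{O}(2D)\otimes\mathcal{L}^{-1})$ is an extension class with $V\subseteq\Ker(\delta_\epsilon)$, where $\delta_\epsilon$ denotes cup product with $\epsilon$. The three ingredients to control are: (i) the bound $(k-1)(s+2-k)$ on the Grassmannian factor coming from $V\subseteq H^0(\mathcal{L})$; (ii) the dimension of $\epsilon$'s compatible with a given $V$, computed via Serre duality $H^1(\mathcal{O}(2D)\otimes\mathcal{L}^{-1})\cong H^0(K\otimes\mathcal{L}\otimes\mathcal{O}(-2D))^{*}$ and Riemann--Roch; and (iii) subtracting the $(k-1)$-dimensional overcount arising from the family of $\mathcal{O}(D)$-sub-bundles of a fixed second-type $E$ with $h^0(E)=k$.

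For $k=2$ the picture is simplest: $V=\langle v\rangle$, so $v$ varies in $\mathbb{P}H^0(\mathcal{L})$ (dimension $s$), $D$ is a discrete sub-divisor of $\mathrm{div}(v)$, and the extensions with $v\in\Ker(\delta_\epsilon)$ form a linear subspace of $\mathbb{P}H^1(\mathcal{O}(2D)\otimes\mathcal{L}^{-1})$ whose dimension is controlled by the rank of cup product with $v$. After subtracting the overcount and imposing the stability constraint on $\epsilon$, the estimate $s+d-4$ emerges upon maximizing over $t$. For $k=3$, $V$ is a pencil, and if $B$ denotes its base locus, then $V(-B)$ is a base-point-free pencil of $L(-B)$. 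Applying the base-point-free pencil trick to the multiplication map out of $V(-B)\otimes H^0(K\otimes L^{-1}(B))$ introduces the Clifford-type $\tfrac{\deg L}{2}$ contribution that produces the $\tfrac{d}{2}$ term in the claimed bound $\tfrac{d}{2}+2(s-3)$.

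The main obstacle will be making the overcounting and stability analysis tight, particularly for small $t$ --- especially $t=0$, where $\mathcal{O}\hookrightarrow E$ and the family of trivial sub-line-bundles in a fixed second-type $E$ with $h^0(E)=k$ has dimension $k-1$. A naive Riemann--Roch count of extensions tends to overshoot the claimed bound, and pinning down the correct correction by simultaneously imposing stability of $E$ and the overcount from $\mathbb{P}V$ is the essential technical step. For $k=3$, this is combined with the careful use of the base-point-free pencil trick to extract the $\tfrac{d}{2}$ factor.
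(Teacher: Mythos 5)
Your parameterization---triples $(D,V,\epsilon)$ with $D\in C^{(t)}$, $V$ a $(k-1)$-dimensional space of sections of $\mathcal{L}(-D)$ required to lift, and $\epsilon$ an extension class with $V\subseteq\Ker(\cup\,\epsilon)$, followed by subtracting the fibre dimension over a fixed $E$---is exactly the count the paper imports from Teixidor: the paper's bound is $\dim\{D\}+h^0(\mathcal{L}(-D))-1+\dim\langle\acute{D}\rangle-(h^0(E)-1)$, where $\langle\acute{D}\rangle$, the span of $\mathrm{div}(v)$ in $\mathbb{P}H^0(K\otimes\mathcal{L}(-2D))^*$, is precisely your locus $\{\epsilon: v\cup\epsilon=0\}$. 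So the framework is the right one, merely phrased via cup products rather than secant spans.

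The gap is that the two estimates on which the lemma actually rests are never carried out; you defer them as ``the essential technical step,'' and your diagnosis of where the difficulty lies is off. Concretely: (i) the dimension of $\{\epsilon\in\mathbb{P}H^1(\mathcal{O}(2D)\otimes\mathcal{L}^{-1}): v\cup\epsilon=0\}$ does not ``overshoot''---it is computed exactly. Cup product with $v$ is Serre-dual to multiplication $H^0(K(-D))\to H^0(K\otimes\mathcal{L}(-2D))$ by $v$, which is injective since $v\neq 0$, so the kernel has dimension $h^0(K\otimes\mathcal{L}(-2D))-h^0(K(-D))=d-t-1$ by Riemann--Roch (using $h^0(\mathcal{O}(D))=1$ and $t<d/2$ from stability), i.e.\ $d-t-2$ projectively. (ii) The fibre of the parameterization over a fixed second-type $E$ with $h^0(E)=k$ has dimension at least $k-1$, since each section of $E$ modulo those of $\mathcal{O}(D)$ produces such a presentation. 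Combined with $h^0(\mathcal{L}(-D))-1\leq s-1$ for $D>0$ (the only place global generation of $\mathcal{L}$ is used), the total $t+(s-1)+(d-t-2)-(k-1)$ equals $s+d-4$ for $k=2$ identically in $t$; there is no maximization over $t$, and the delicate case is rather $t=0$, which your sketch does not resolve either. For $k=3$ the relevant locus is $\langle\acute{D_1}\rangle\cap\langle\acute{D_2}\rangle$ for two divisors of the pencil, and extracting the $\frac{d}{2}$ term (via the base-point-free pencil trick, as you indicate, or via Teixidor's analysis which the paper cites) is again left undone. As it stands the proposal is a correct plan, not a proof.
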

\begin{proof}
For $k=2$, with notations as in proof of \cite[Page 124]{T. 1}, the dimension of the set of vector bundles $E\in B_{2, d, \mathcal{L}}^k$ of second type
is bounded by
  $$
\dim \lbrace D \rbrace  +   h^0(\mathcal{L}(-D))-1 + \dim \langle \acute{D} \rangle - (h^0(E)-1)
$$
where $\acute{D}$ is a divisor in the linear series $\vert \mathcal{L}(-D) \vert$ and $t=\deg(D)$.
It is now an easy argument to see that this quantity is bounded by
$$t + (s-1) +  (d-t-2)-1=d+s-4.$$
  If $k=3$, then a close analysis in the proof of \cite[Theorem 2]{T. 2}, implies that the dimension of the bundles $E\in \dim B^3_{2, d, \mathcal{L}}$ which are of second type, is bounded by the quantity
$\dim \lbrace D \rbrace + \dim \Grass(2, \mathcal{L}(-D)) +\dim \langle \acute{D_1}\cap \acute{D_2}\rangle - (h^0(E)-1)
\leq \frac{d}{2}+2(s-3),$
as required.
\end{proof}
\begin{corollary}
If $k$ is odd, then
\begin{align}\label{diag11}
\dim B^k_{2, K}\leq 3g-2k-3.
\end{align}
\end{corollary}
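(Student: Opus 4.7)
The plan is to run the proof of Theorem \ref{symmetric bound theorem non-hyper} with the sharper hypothesis $\dim X\geq 3g-2k-2$ and derive a contradiction, using Lemma \ref{Mumford Thm for Fixed determinantals} to gain the extra step in the second-type case. I would write $k=2m+1$ with $m\geq 1$ and separate the two types.

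For a general $E\in X$ of first type, I would repeat the calculation in the final paragraph of the proof of Theorem \ref{symmetric bound theorem non-hyper} verbatim, but starting from $\dim X\geq 3g-2k-2$ instead of $3g-2k-1$. This produces $\dim B^k_{1,t}\geq 2t-2k+2$ for some $0\leq t\leq g-2$, so Martens' bound $\dim B^k_{1,t}\leq t-2k+2$ forces $t\leq 0$; but first type demands $h^0(\mathcal{O}(D))=k\geq 3$, hence $t\geq k-1\geq 2$, a contradiction. This half is parity-insensitive and works for any $k\geq 3$.

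For a general $E\in X$ of second type, the parity of $k$ enters via the branch $k=3$ of Lemma \ref{Mumford Thm for Fixed determinantals}. The base case $m=1$ follows by applying the lemma directly with $\mathcal{L}=K$, $s=g-1$, $d=2g-2$: the second-type locus has dimension at most $\tfrac{d}{2}+2(s-3)=3g-9=3g-2k-3$. For $m\geq 2$, I would choose general points $p_1,\ldots,p_{m-1}\in C$ and consider the injection $E\mapsto E':=E(-p_1-\cdots-p_{m-1})$ from $X$ into $B^3_{2,d',\mathcal{L}'}$, where $d'=2g-2m$ and $\mathcal{L}':=K(-2\sum p_i)$ is globally generated with $h^0(\mathcal{L}')=g-2m+2$, giving $s=g-2m+1$. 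Provided the image lands in the second-type locus, the lemma gives $\dim X\leq \tfrac{d'}{2}+2(s-3)=3g-5m-4$, and the elementary inequality $3g-5m-4\leq 3g-4m-5=3g-2k-3$, valid for all $m\geq 1$, closes the argument.

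The main obstacle I anticipate is verifying that $E'$ remains of second type for generic $p_i$. If $E'$ were of first type, then a line sub-bundle $\mathcal{O}(F)\subset E'$ with $h^0(\mathcal{O}(F))=3$ would absorb $H^0(E')$; translating back gives $\mathcal{O}(F+\sum p_i)\subset E$ such that the $3$-dimensional subspace $H^0(E(-\sum p_i))\subset H^0(E)$ would coincide with the image of $H^0(\mathcal{O}(F+\sum p_i))$ in $H^0(E)$, a closed condition on $(p_1,\ldots,p_{m-1})\in C^{m-1}$ which fails on a dense open subset precisely because $E$ itself is of second type, so no line sub-bundle of $E$ absorbs all $k$ sections. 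Global generation of $\mathcal{L}'$ and the stated $h^0$ are standard consequences of $2(m-1)\leq g-2$, which follows from the standing hypothesis $k\leq g+1$.
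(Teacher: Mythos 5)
Your overall architecture coincides with the paper's: first-type components are handled by the Martens count (your version is correct and gives $3g-2k-3$, one unit sharper than the corresponding step in Theorem \ref{symmetric bound theorem non-hyper}), and second-type components are handled by twisting down by $\frac{k-3}{2}$ general points and invoking the $k=3$ branch of Lemma \ref{Mumford Thm for Fixed determinantals}; your arithmetic $\frac{d'}{2}+2(s-3)=3g-5m-4\leq 3g-2k-3$ matches the paper's $(n-1)+\dim B^3_{2,2g-2n, K(-2p_1-\cdots-2p_{n-1})}\leq 3g-4n-5$, and your base case $k=3$, which needs no twisting, is complete.

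The gap is in your justification that $E'=E(-p_1-\cdots-p_{m-1})$ remains of second type for general points; this is exactly the content of the paper's Claim III, and it is the crux of the argument for $k\geq 5$. You assert that first-type-ness of $E'$ is a closed condition on $(p_1,\ldots,p_{m-1})$ which ``fails on a dense open subset precisely because $E$ itself is of second type, so no line sub-bundle of $E$ absorbs all $k$ sections.'' This is a non sequitur. For $E'$ to be of first type one only needs a line sub-bundle $\mathcal{O}(F)\subset E'$ whose sections absorb the three sections of $E'$, equivalently a line sub-bundle of $E$ whose sections contain the small subspace $H^0(E(-\sum p_i))\subset H^0(E)$ --- not all $k$ sections --- and this sub-bundle is permitted to vary with the points. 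Hence the second-type hypothesis on $E$ does not by itself exclude that $E(-\sum p_i)$ is of first type for \emph{every} choice of points, and even granting closedness you have not produced a single tuple at which the condition fails. This is precisely why the paper does not argue pointwise for a fixed $E$: Claim III is reduced to Claim II, which is proved by a dimension count over the whole component $X$ --- the putative first-type sub-line-bundles of the twists sweep out a locus in a suitable $B^2_{1,g-2}$ or $B^3_{1,g-2}$ of dimension at least $\dim X+[\frac{k-2}{2}]$, contradicting Martens' theorem. You would need to supply such a family argument (or some other genuine proof that the generic twist stays of second type); as written, the case $k\geq 5$ is not established.
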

\begin{proof}
An irreducible component $X$  of $B_{2, K}^k$ whose general member is a bundle of first type
has dimension $\leq 3g-2k-3$, because otherwise one obtains $\dim B_{1, t}^{k}\geq 2t-2k+2$ for some $k$ and $t$ with
$0<2k-2\leq t\leq g-2$. This is obviously absurd.

Assume that $\dim X=3g-2k-2$ and set $k-1=2n$. \\
\textit{Claim III:} If a general $E\in X$ fails to be of first type, then for general points $p_1, \cdots p_{i}\in C$ with $1\leq i\leq \frac{k-1}{2}$ the stable vector bundle $E(-p_1-\cdots -p_i)$
would fail to admit an extension of first type. \\
The proof of \textit{Claim III} is similar to the proof of \textit{Claim II} in corollary \ref{Mumford thm for rank 2}.

 Lemma (\ref{Mumford Thm for Fixed determinantals}) together with \textit{Claim III}
implies that if a general element of $X$ fails to be of first type
 then
$$3g-2k-2\leq (n-1)+\dim B^3_{2, 2g-2n, K(-2p_1-\cdots-2p_{n-1})}\leq 3g-4n-5,$$
 which is absurd.
\end{proof}
\begin{remark}\label{remark 0}
(a) If $C$ is an arbitrary $3$-gonal curve, then Theorem \ref{bound theorem non-hyper complete 2} together with Theorem \cite[Thm. 1.2(b)]{C-F-K}
imply $\dim B^2_{2, d}=2g+d-8$. Indeed, Theorem \cite[Thm. 1.2(b)]{C-F-K} establishes this result for a general $3$-gonal curve and
so for non-general $3$-gonal curves, one has $\dim B^2_{2, d}\geq 2g+d-8$. Now, Theorem \ref{bound theorem non-hyper complete 2} applied to such a non-generic curve implies
the equality for any $3$-gonal curve.

\noindent (b) According to theorem \ref{dimension equality}, one immediately re-obtains
$\dim B^2_{n, n(g-1)}=n^2(g-1)-3$. Meanwhile, by the same theorem, an immediate prediction suggests the quantity $n(n-1)(g-1)+d-2k+1$ as a bound to the dimension of $B^k_{n, d}$ when $n\geq 3, k\geq 3$. A proof to this expectation is unknown to me. Such a bound re-obtains Marten's bound on the dimension of the Brill-Noether schemes of line bundles.

\noindent (c) The proofs of theorems \ref{bound theorem non-hyper complete} and \ref{dimension equality} indicate that the Petri map is injective at the bundles $E\in B^2_{n, d}$ which are of second type. Therefore
$$\Sing B^2_{n, d}\subseteq B^3_{n, d}\cup \mathcal{E}_1, $$
 where $\mathcal{E}_1$ denotes the set of bundles $E\in B^2_{n, d}$ of first type.
 This reproves the generic smoothness of the locus' introduced by Teixidor in \cite{T. 1} and Flamini etal. in \cite{C-F-K}.
\end{remark}
\textbf{Acknowledgment:} The author wishes to thank F. Flamini, P. Newstead and M.Teixidor for their valuable hints and for sharing their knowledge. I specially thank G. H. Hitching whose careful reading and comments changed the previous manuscript of this paper, considerably.
 Teixidor supported me by sending a draft of the unpublished paper \cite{Feinberg} at the right time; to her, I express my double gratitude. 

\end{document}